\theoremstyle{plain}
\newtheorem{thm}{Theorem}[section]
\newtheorem{lem}[thm]{Lemma}
\newtheorem{cor}[thm]{Corollary}
\newtheorem{que}[thm]{Question}
\theoremstyle{definition}
\newtheorem{defn}[thm]{Definition}
\title{Numbers and the Heights of Their Happiness}
\author{May Mei}
\address{Department of Mathematics \& Computer Science, Denison University, 43023}
\email{meim@denison.edu}
\author{Andrew Read-McFarland}
\address{Department of Mathematics \& Computer Science, Denison University, 43023}
\email{readmc\_a1@denison.edu}
\keywords{happy numbers, integer sequences}
\subjclass{11A63}
\begin{document}

\begin{abstract}
A generalized happy function, $S_{e,b}$ maps a positive integer to the sum of its base $b$ digits raised to the $e^\text{th}$ power. We say that $x$ is a base $b$, $e$ power, height $h$, $u$ attracted number if $h$ is the smallest positive integer so that $S^{h}_{e,b}(x)=u$. Happy numbers are then base 10, 2 power, 1 attracted numbers of any height. Let $\sigma_{h,e,b}(u)$ denote the smallest height $h$, $u$ attracted number for a fixed base $b$ and exponent $e$ and let $g(e)$ denote the smallest number so that every integer can be written as $x_{1}^{e}+x_{2}^{e}+...+x_{g(e)}^{e}$ for some nonnegative integers $x_{1},x_{2},...,x_{g(e)}$. In this paper we prove that if $p_{e,b}$ is the smallest nonnegative integer such that $b^{p_{e,b}}>g(e)$, $\displaystyle d=\left\lceil \frac{g(e)+1}{1-(\frac{b-2}{b-1})^{e}}+e+p_{e,b}\right\rceil$, and $\sigma_{h,e,b}(u)\geq b^{d}$, then $S_{e,b}(\sigma_{h+1,e,b}(u))=\sigma_{h,e,b}(u)$.
\end{abstract}

\maketitle

\section{Introduction}

Let $S_{e,b}$ be the function that maps a positive base $b$ integer to the sum of its digits raised to the $e^\text{th}$ power, where $e$ is a positive integer. That is for $x=\sum_{i=0}^{n-1}a_i b^i$, with $0\leq a_{i}\leq b-1$ for all $i$,
$$S_{e,b}\left(\sum_{i=0}^{n-1} a_i b^i \right)=\sum_{i=0}^{n-1} a_i^e.$$
If $S_{e,b}^{h}(x)=1$ for some integer $h$, then $x$ is said to be an $e$ power, $b$ happy number. In \cite{Guy:2004}, Guy gives the smallest 2 power, 10 happy numbers of heights 0 through 6 and asks if 78999 is the smallest height 7 happy number. Grundman and Teeple answer Guy in \cite{GrundmanTeeple:2003}, giving the smallest 2 power, 10 happy numbers of heights 0-10, and 3 power, 10 happy numbers of heights 0-8. From Grundman and Teeple's work, one can extract an algorithm for finding the smallest happy number of height $h+1$ if the smallest happy number of height $h$ is known.  The main results of this paper are Theorems \ref{L:trailnines} and \ref{L:ninesmap}, which jointly imply that once the smallest height $h+1$, $u$ attracted, base $b$ number is sufficiently large, applying $S_{e,b}$ to that number will yield the smallest height $h$, $u$ attracted, base $b$ number. The results of this paper hold not only for happy numbers (i.e. 1 attracted), but more generally for $u$ attracted numbers. Moreover, our results hold for all bases and exponents.

\begin{defn}
For a fixed base $b$, exponent $e$, and positive integer $u$, we say that a positive integer $x$ is $u$ attracted if $S_{e,b}^{n}(x)=u$ for some nonnegative integer $n$. If $h$ is the smallest nonnegative integer so that $S_{e,b}^{h}(x)=u$ then $x$ is height $h$, $u$ attracted number. (As a convention, $S_{e,b}^0(x)=x$.)
\end{defn}

\begin{defn}
For a fixed base $b$, exponent $e$, positive integer $u$, and nonnegative integer $h$, let $\sigma_{h,e,b}(u)$ denote the smallest height $h$, $u$ attracted number. That is, the smallest positive integer $k$ with the property that $S^{h}_{e,b}(k)=u$ and $S^{n}_{e,b}(k) \neq u$ for $n<h$. Similarly, for positive $h$, let $\tau_{h,e,b}(u)$ denote the second smallest height $h$, $u$ attracted number. That is $S^{h}_{e,b}(l)=u$, $S^{n}_{e,b}(l)\neq u$ for $n<h$, and $\sigma_{h,e,b}(u)<l$.
\end{defn}

Some of the following proofs rely upon knowing the smallest integer $x$ such that for a given $e$, every integer is expressible as the sum of at most $x$ many integers raised to the $e^{\text{th}}$ power. We define $g(e)$ for this purpose.

\begin{defn}
For a fixed positive integer $e$, let $g(e)$ denote the smallest integer such that every nonnegative integer is expressible as $x_{1}^{e}+x_{2}^{e}+...+x_{g(e)}^{e}$ where $x_{1},x_{2},...,x_{g(e)}$ are all nonnegative integers.
\end{defn}

This is the well-known Waring's problem. Many surveys about the history of this problem exist, see for instance \cite{VaughanWooley:2000}.

For the entirety of this paper, we assume that base $b \geq 2$ is an integer, exponent $e \geq 1$ is an integer, height $h$ is a nonnegative integer, attractor $u$ is a positive integer, and that $x$ denotes a positive integer. Additionally, when we say $\lceil x \rceil=y$ we mean that $y$ is the smallest integer such that $y\geq x$, and similarly, if $\lfloor x \rfloor=y$, then $y$ is the largest integer such that $y\leq x$.

\section{Mapping Attracted Numbers}
In this section, we establish in Theorem $\ref{L:willmap}$ a criterion, depending on $g(e)$ that ensures that $S_{e,b}(\sigma_{h+1,e,b}(u))=\sigma_{h,e,b}(u)$ for a fixed base $b$, exponent $e$, height $h$, and integer $u$.
\begin{lem}
\label{L:digitbound}
Fix a base $b$, exponent $e$, and attractor $u$. The smallest positive integer $x$ such that $S_{e,b}(x)=u$ has $n$ digits, where $ \frac{u}{(b-1)^{e}}  \leq n \leq  \frac{u}{(b-1)^{e}}  + g(e)$.
\end{lem}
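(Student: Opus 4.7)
The plan is to prove the lower and upper bounds on $n$ separately, using a maximum-digit-sum argument for the lower bound and a Waring-style explicit construction for the upper bound. Throughout, we use the elementary fact that, in base $b$, a positive integer with fewer digits is smaller than one with more digits; hence the smallest $x$ satisfying $S_{e,b}(x)=u$ has the fewest digits among all such $x$.

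For the lower bound, I would observe that if $x$ has $n$ base-$b$ digits, then each digit is at most $b-1$, so
\[
S_{e,b}(x)=\sum_{i=0}^{n-1}a_i^e \;\leq\; n(b-1)^e.
\]
Since we require $S_{e,b}(x)=u$, this forces $n \geq u/(b-1)^e$, giving the left-hand inequality.

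For the upper bound, I would exhibit a single $x$ satisfying $S_{e,b}(x)=u$ with at most $u/(b-1)^e+g(e)$ digits; by the observation above, the smallest $x$ has no more digits than this. Write $u = k(b-1)^e + r$ where $k=\lfloor u/(b-1)^e\rfloor$ and $0\le r<(b-1)^e$. By the definition of $g(e)$, there exist nonnegative integers $z_1,\dots,z_{g(e)}$ with $r = z_1^e+\cdots+z_{g(e)}^e$. The key observation is that each $z_i$ is automatically a valid base-$b$ digit: since $z_i^e\le r<(b-1)^e$, we get $z_i<b-1$, so $z_i\in\{0,1,\dots,b-2\}$. Now form $x$ whose base-$b$ representation consists of $k$ copies of the digit $b-1$ followed by the digits $z_1,\ldots,z_{g(e)}$ (dropping any leading zeros). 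Then
\[
S_{e,b}(x)=k(b-1)^e+\sum_{i=1}^{g(e)}z_i^e = k(b-1)^e+r = u,
\]
and $x$ has at most $k+g(e)\le u/(b-1)^e+g(e)$ digits, establishing the right-hand inequality. (Since $u\ge 1$, at least one of $k$ or the $z_i$ is nonzero, so $x$ is indeed a positive integer.)

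I do not expect any real obstacle here; the only subtlety worth flagging is the step guaranteeing $z_i\le b-2$, which relies crucially on choosing the Waring decomposition for the \emph{remainder} $r$ rather than for $u$ itself (a decomposition of $u$ could use summands exceeding $b-1$, which would not be legal digits). This is what forces the remainder-style splitting $u = k(b-1)^e + r$ and is the conceptual heart of why $g(e)$, as opposed to some larger quantity, suffices in the bound.
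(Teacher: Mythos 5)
Your proof is correct and follows essentially the same route as the paper: the lower bound via the maximum per-digit contribution $(b-1)^e$, and the upper bound by splitting $u=k(b-1)^e+r$ and applying Waring's theorem to the remainder $r$ (which is exactly what guarantees the summands are valid digits) to build an explicit witness with $k+g(e)$ digits. The only cosmetic difference is the ordering of the digits in the constructed number, which does not affect the digit count.
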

\begin{proof}
Since the maximum value of the image of each digit under $S_{e,b}$ is $(b-1)^{e}$, $\frac{u}{(b-1)^{e}} $ is a lower bound for the number of digits of $x$.
Let $q$ and $r$ be the quotient and remainder of $u$ divided by $(b-1)^{e}$, respectively, that is $q$ is a nonnegative integer, $0 \leq r<(b-1)^e$, and $u=q(b-1)^{e}+r$. Let $x_{1},...,x_{g(e)}$ be integers such that $x_{1}^{e}+...+x_{g(e)}^{e}=r$. Since $r<(b-1)^{e}$, $x_{1},...,x_{g(e)}<b-1$ and so are valid digits in base $b$. Without loss of generality, $x_{1}\leq x_{2} \leq ... \leq x_{g(e)}$. Let $y$ be the positive integer formed by the digits $x_{1},x_{2},...,x_{g(e)}$ followed by $q$ digits, each of which is $b-1$. Since $x$ is minimal, it follows that $x \leq y$. So $n$, the number of digits of $x$, must be less than or equal to the number of digits of $y$, which is $\lfloor \frac{u}{(b-1)^{e}} \rfloor + g(e)$.
\end{proof}

\begin{thm}
\label{L:willmap}
Fix a base $b$, exponent $e$, positive height $h$, and attractor $u$. If $ \frac{\sigma_{h,e,b}(u)}{(b-1)^{e}} +g(e) \leq  \frac{\tau_{h,e,b}(u)}{(b-1)^{e}} $, then $S_{e,b}(\sigma_{h+1,e,b}(u))=\sigma_{h,e,b}(u)$.
\end{thm}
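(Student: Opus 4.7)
The plan is to show that $S_{e,b}(\sigma_{h+1,e,b}(u))$, which is automatically a height $h$, $u$-attracted number (because $m:=\sigma_{h+1,e,b}(u)$ has height $h+1\geq 2$), must be the smallest such. Writing $v=\sigma_{h,e,b}(u)$ and $w=\tau_{h,e,b}(u)$, every height $h$, $u$-attracted number either equals $v$ or is at least $w$, so it suffices to rule out $S_{e,b}(m)\geq w$.

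Suppose for contradiction that $S_{e,b}(m)\geq w$. Since each digit contributes at most $(b-1)^e$ to $S_{e,b}$, the integer $m$ must then have at least $\lceil w/(b-1)^e\rceil$ digits, and so $m\geq b^{\lceil w/(b-1)^e\rceil-1}$. To produce a smaller witness of height $h+1$, I would invoke Lemma \ref{L:digitbound}: let $x^{*}$ be the smallest positive integer with $S_{e,b}(x^{*})=v$. Then $x^{*}$ has at most $\lfloor v/(b-1)^e\rfloor+g(e)$ digits, so $x^{*}<b^{\lfloor v/(b-1)^e\rfloor+g(e)}$. The hypothesis $v/(b-1)^e+g(e)\leq w/(b-1)^e$ gives
\[
\lfloor v/(b-1)^e\rfloor+g(e)\;\leq\;v/(b-1)^e+g(e)\;\leq\;w/(b-1)^e\;\leq\;\lceil w/(b-1)^e\rceil,
\]
so $x^{*}\leq m$ in any case; the potential equality case forces $v$ to be a multiple of $(b-1)^e$, but in that case the construction inside Lemma \ref{L:digitbound} yields a $y$ with only $v/(b-1)^e$ digits (all the $x_{i}$ can be chosen to be $0$), giving the strict inequality $x^{*}<m$.

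Next I would verify that $x^{*}$ really is a height $h+1$, $u$-attracted number. Since $S_{e,b}^{h+1}(x^{*})=S_{e,b}^{h}(v)=u$, its height is at most $h+1$. If its height were some $n$ with $1\leq n\leq h$, then $S_{e,b}^{n-1}(v)=u$, contradicting $\mathrm{height}(v)=h\geq 1$. The only remaining case is $n=0$, i.e.\ $x^{*}=u$, which would mean $u$ lies on a cycle passing through $v$; here I would replace $x^{*}$ by $b\cdot x^{*}$ (appending a trailing zero, which preserves $S_{e,b}$) to obtain a strictly larger integer of height exactly $h+1$ that still fits the same digit bound after a mild book-keeping adjustment.

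Combining these gives $m\leq x^{*}<b^{\lceil w/(b-1)^e\rceil -1}\leq m$, a contradiction, so $S_{e,b}(m)<w$ and hence $S_{e,b}(m)=v$. The heart of the argument is the crude digit-count comparison powered by Lemma \ref{L:digitbound}; the genuine obstacle is bookkeeping the corner cases, namely the tight digit-count inequality (which is resolved by revisiting the construction of $y$ when $(b-1)^{e}\mid v$) and the possibility that $x^{*}=u$ sits in a cycle (resolved by passing to a slightly padded witness).
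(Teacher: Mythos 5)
Your argument is, at its core, the same as the paper's: both proofs play the digit-count bounds of Lemma \ref{L:digitbound} off against the hypothesis $\frac{\sigma_{h,e,b}(u)}{(b-1)^{e}}+g(e)\leq\frac{\tau_{h,e,b}(u)}{(b-1)^{e}}$ to show that the minimal preimage of $\sigma_{h,e,b}(u)$ is smaller than every preimage of every larger height $h$ number. You merely phrase it as a contradiction starting from $S_{e,b}(\sigma_{h+1,e,b}(u))\geq\tau_{h,e,b}(u)$, and you break the tie in the equal-digit-count case differently: the paper notes that equality forces the competing preimage to be a string of $(b-1)$'s, while you note that it forces $(b-1)^{e}\mid\sigma_{h,e,b}(u)$ and then sharpen Lemma \ref{L:digitbound} by $g(e)$ digits. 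Both tie-breaks are valid, and your main chain of inequalities checks out.

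The one genuine gap is your handling of the case $x^{*}=u$. You are right that this case must be excluded before you may write $m\leq x^{*}$, and you deserve credit for noticing it at all --- the paper's own proof silently assumes its minimal preimage has height $h+1$, and the configuration is realizable (for $b=10$, $e=2$, $u=4$, $h=7$ one has $\sigma_{7,2,10}(4)=16$ and the smallest preimage of $16$ is $4=u$, although that instance happens not to satisfy the theorem's hypothesis). But your repair does not go through as stated: passing from $x^{*}$ to $b\cdot x^{*}$ adds a digit, and the entire argument is a digit-count comparison that is tight to within one digit. In the generic case you know only that $x^{*}$ has at most $\lceil w/(b-1)^{e}\rceil-1$ digits, so $b\cdot x^{*}$ may have $\lceil w/(b-1)^{e}\rceil$ digits, the same lower bound you derived for $m$; the comparison $b\cdot x^{*}<m$ then fails, and the ``mild book-keeping adjustment'' you wave at is precisely the inequality that breaks. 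A correct repair must stay inside the original digit budget --- for instance, replace $x^{*}$ by the explicit preimage $y$ built in the proof of Lemma \ref{L:digitbound} (which satisfies the same digit bound and, being a preimage of $\sigma_{h,e,b}(u)$ other than $u$, has height exactly $h+1$) whenever $y\neq u$, and then argue separately when that particular $y$ equals $u$. As written, the $n=0$ branch of your proof is not closed.
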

\begin{proof}
Let $x$ be the smallest integer such that $S_{e,b}(x)=\sigma_{h,e,b}(u)$. Let $z$ be a height $h$, $u$ attracted number that is greater than $\sigma_{h,e,b}(u)$ (recall that $\tau_{h,e,b}$ is the smallest such number) and $y$ any integer such that $S_{e,b}(y)=z$. That is, $y$ is a height $h+1$, $u$ attracted number whose image is not $\sigma_{h,e,b}(u)$. Let $n$ be the number of digits of $x$ and $m$ be the number of digits of $y$. We will show that $x<y$. By Lemma~\ref{L:digitbound}, $n\leq  \frac{\sigma_{h,e,b}(u)}{(b-1)^{e}} +g(e)$ and $ \frac{\tau_{h,e,b}(u)}{(b-1)^{e}}  \leq \frac{z}{(b-1)^e}\leq m$. By hypothesis, $ \frac{\sigma_{h,e,b}(u)}{(b-1)^{e}} +g(e) \leq  \frac{\tau_{h,e,b}(u)}{(b-1)^{e}} $, so $n \leq m$.  If $n<m$, then $x<y$, so let us suppose that $n=m$. It must then be the case that $\frac{\sigma_{h,e,b}(u)}{(b-1)^{e}} +g(e)=\frac{z}{(b-1)^{e}}$. Since $S_{e,b}(y)= z$ and $y$ has $m=\frac{z}{(b-1)^{e}}$ digits, $y$ is the concatenation of $m$ digits, each of which is $b-1$. Since $x \neq y$ (as they have different images under $S_{e,b}$) and $x$ and $y$ have the same number of digits, at least one digit of $x$ is not $b-1$. Thus, $x<y$. Hence $x$ is less than every other height $h+1$, $u$-attracted number, and so $x=\sigma_{h+1,e,b}(u)$. Since $S_{e,b}(x)=\sigma_{h,e,b}(u)$, $S_{e,b}(\sigma_{h+1,e,b}(u))=\sigma_{h,e,b}(u)$.\end{proof}

From \cite{GrundmanTeeple:2003}, it is known that $\sigma_{7,2,10}=78999$ and $\tau_{7,2,10}(1)=79899$. 
\begin{que}
Under what conditions is $\tau_{h,e,b}(u)$ a permutation of the digits of $\sigma_{h,e,b}(u)$?
\end{que}

\section{Large $u$ Attracted Numbers}\label{S:largeuattracted}
In this section, we prove Theorems \ref{L:trailnines} and \ref{L:ninesmap}, which imply that once $\sigma_{h,e,b}(u)$ is sufficiently large, $S_{e,b}(\sigma_{h+1,e,b}(u))=\sigma_{h,e,b}(u)$. 
\begin{thm}
\label{L:trailnines}
Fix a base $b$, exponent $e$, positive height $h$, and attractor $u$. Let $\delta$ be a positive integer, and let $$d=\frac{g(e)+1}{1-(\frac{b-2}{b-1})^{e}}+\delta.$$ If $\sigma_{h,e,b}(u)$ has at least $d$ digits, then the base $b$ expansion of $\sigma_{h,e,b}$ is of the form $$\sigma_{h,e,b}(u)=\sum_{i=0}^{n-1}a_{i}b^{i}$$ with $a_{0},...,a_{\delta}=b-1$. More informally, the rightmost $\delta + 1$ digits of $\sigma_{h,e,b}(u)$ are all $b-1$.
\end{thm}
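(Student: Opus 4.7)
The plan is to argue by contradiction. Set $x = \sigma_{h,e,b}(u)$ with base-$b$ expansion $x = \sum_{i=0}^{n-1} a_i b^i$, and suppose $n \geq d$ yet $a_j \neq b-1$ for some $j \leq \delta$. The strategy is to construct a positive integer $y < x$ with $S_{e,b}(y) = S_{e,b}(x)$. Since $h \geq 1$, this equality forces $S_{e,b}^h(y) = u$, and a short argument shows that $y$ must have height exactly $h$ for $u$ (any strictly smaller height for $y$ would pull back to the same for $x$). The existence of such a $y$ then contradicts the minimality of $x$.

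First I would establish by a swap argument that the digits of $\sigma_{h,e,b}(u)$ are non-increasing from low to high position: if $a_i < a_j$ for some $i < j$, interchanging these two digits produces a strictly smaller integer with the same multiset of digits, and hence the same $S_{e,b}$-image. Applying this monotonicity to $x$, together with the standing assumption that the low $\delta+1$ digits are not all $b-1$, forces $a_\delta \leq b-2$, and consequently $a_j \leq b-2$ for every $j \geq \delta$.

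Next I would construct $y$ by repacking the digit-power mass from the high positions into copies of $b-1$. Set $H = \sum_{j=\delta}^{n-1} a_j^e$, so that $H \leq (n-\delta)(b-2)^e$. Write $H = q(b-1)^e + r$ with $0 \leq r < (b-1)^e$, and use the defining property of $g(e)$ to express $r = c_1^e + \cdots + c_{g(e)}^e$; since $r < (b-1)^e$, each $c_i < b-1$ and so is a legitimate base-$b$ digit. Let $y$ be the integer whose digits are $a_0, \ldots, a_{\delta-1}$ in positions $0, \ldots, \delta-1$, then $b-1$ repeated $q$ times, and finally $c_1, \ldots, c_{g(e)}$ in the highest positions. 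By construction, $S_{e,b}(y) = \sum_{i<\delta} a_i^e + q(b-1)^e + r = S_{e,b}(x)$, and $y$ has at most $\delta + q + g(e)$ digits.

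The main obstacle, and the reason for the precise formula defining $d$, is verifying that $y$ has strictly fewer digits than $x$. Using $q \leq H/(b-1)^e \leq (n-\delta)\left(\frac{b-2}{b-1}\right)^e$, the desired inequality $\delta + q + g(e) \leq n-1$ rearranges to $n - \delta \geq \frac{g(e)+1}{1-\left(\frac{b-2}{b-1}\right)^e}$, which is exactly the hypothesis $n \geq d$. Hence $y < b^{n-1} \leq x$, producing the required contradiction.
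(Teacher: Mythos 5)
Your proof is correct and follows essentially the same strategy as the paper's: assume the trailing digits are not all $b-1$, use the swap/monotonicity observation on the digits of a minimal attracted number, and repack the remaining digit-power mass into $q$ copies of $b-1$ plus $g(e)$ Waring digits to produce a strictly smaller number with the same image under $S_{e,b}$, contradicting minimality. The only cosmetic difference is that you split the expansion at position $\delta$ while the paper splits at the first position holding a digit other than $b-1$; both choices lead to the same counting inequality $n-\delta\geq \frac{g(e)+1}{1-(\frac{b-2}{b-1})^{e}}$.
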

\begin{proof}
In this proof, we will show that if $\sigma_{h,e,b}$ has ``too many'' digits which are not equal to $b-1$, we can construct a smaller number with the same image as $\sigma_{h,e,b}$. This contradicts the definition of $\sigma_{h,e,b}$.

One can verify $\sigma_{1,e,b}(1)=10$ (in base $b$) for all $e,b$ and that this is the only number of the form $\sigma_{h,e,b}$ with a 0 digit. However, $10$ is a 2 digit number and $d > 2$ for integers $e>1$. Thus, using the base $b$ expansion from the statement of the theorem, $a_{i+1} \leq a_{i}$ for $0 \leq i < n-1$ (its digits must appear in increasing order from left to right) and none of its digits can be 0 since  $\sigma_{h,e,b}(u)$ is the least height $h$, $u$ attracted number. 

In the case $a_i=b-1$ for all $i$, this theorem is trivially true. Otherwise, let us construct $z$, the sum of the image of the digits which are not equal to $b-1$. In the case that some digits of $\sigma_{h,e,b,}(u)$ are $b-1$ and some are not, define an integer parameter $k \geq 2$ to be such that $a_{k-1}<b-1$ and for all $i< k-1$, $a_{i}=b-1$. That is, the $k^\text{th}$ place is the first (from the right) in which a digit that is not $b-1$ appears. Hence, $$\sigma_{h,e,b}(u)=\sum_{i=k-1}^{n-1}a_{i}b^{i}+\sum_{i=0}^{k-2}(b-1)b^{i}.$$ Let $y=S_{e,b}(\sigma_{h,e,b}(u))$ and let $z=y-(k-1)(b-1)^{e}$, that is, $$z=\sum_{i=k-1}^{n-1}a_{i}^e.$$
In the case that no digits of $\sigma_{h,e,b}$ are $b-1$, set $k=1$ and let $z=\sum_{i=0}^{n-1}a_{i}^e.$ We proceed to show that if $k\leq \delta+1$, we can construct a number smaller than $\sigma_{h,e,b}$ with the same image as $\sigma_{h,e,b}$, a contradiction. Let $n'=n-(k-1)$ and let $m=\lfloor \frac{z}{(b-1)^{e}}\rfloor$.  
Since $z$ is the sum of $n'$ many terms of the form $a_{i}^{e}$ where $a_{i}\leq b-2$ for all $i$,  $n'\geq \frac{z}{(b-2)^{e}}$. Thus,  $\frac{(b-2)^{e}}{(b-1)^{e}}n' \geq  \frac{z}{(b-1)^{e}} \geq m$.

So, $$\left(\frac{b-2}{b-1}\right)^{e}n'+g(e)+1\geq m+g(e)+1.$$ 
By the definition of $d$, $d-\delta=\frac{g(e)+1}{1-(\frac{b-2}{b-1})^{e}}$, and since $k\leq \delta+1$, $d-(k-1)\geq \frac{g(e)+1}{1-(\frac{b-2}{b-1})^{e}}$. Thus,
$$(d-(k-1))\left(1-\left(\frac{b-2}{b-1}\right)^{e}\right)  \geq  g(e)+1.$$
And since $n'\geq d-(k-1)$ and $1-(\frac{b-2}{b-1})^{e}>0$, we have that $n'(1-(\frac{b-2}{b-1})^{e}) \geq  g(e)+1$ and hence
$$n' \geq  g(e)+1+n'\left(\frac{b-2}{b-1}\right)^{e}  \geq  m+g(e) + 1.$$
Therefore, $n'\geq m + g(e)+1$.

Let $r$ be the remainder of $y$ divided by $(b-1)^{e}$, that is $y=q(b-1)^e+r$ where $q \geq 0$ and $(b-1)^e>r \geq 0$. From the definition of $m$, $q=m+(k-1)$. Let $x_{1},x_{2},...,x_{g(e)}$ be integers less than $b-1$ so that $x_{1}^{e}+x_{2}^{e}+...+x_{g(e)}^{e}=r$. There are such $x_{j}$ since $g(e)$ is defined so that such integers exist, and all integers must be less than $b-1$ since $r<(b-1)^{e}$. Without loss of generality, $x_{1}\leq x_{2} \leq ... \leq x_{g(e)}$. Let $x$ be a base $b$ number with digits $x_1, ..., x_{g(e)}$ followed by $m+(k-1)$ many $b-1$ digits.

Hence, $S_{e,b}(x)=y$, and $x$ has at most $g(e)+m+(k-1)$ digits.
Since $n'=n-(k-1)$, $n\geq g(e)+1+m+(k-1)$. However, this means that $x$ has fewer digits than $\sigma_{h,e,b}(u)$. This contradicts the fact that $\sigma_{h,e,b}(u)$ is the smallest height $h$, $u$ attracted integer, and hence, $k>\delta+1$.
\end{proof}

For ease of notation, we define a constant $p_{e,b}$.

\begin{defn}
For a fixed exponent $e$ and base $b$, let $p_{e,b}$ be the smallest integer such that $b^{p_{e,b}}>g(e)$.
\end{defn}

\begin{thm}
\label{L:ninesmap}
Fix a base $b$, exponent $e$, positive height $h$, and attractor $u$. If $\sigma_{h,e,b}(u)=\sum_{i=0}^{n-1}a_{i}b^i$ where $a_{0},...,a_{e+p_{e,b}}=b-1$, then $S_{e,b}(\sigma_{h+1,e,b}(u))=\sigma_{h,e,b}(u)$.
\end{thm}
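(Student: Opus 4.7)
The plan is to invoke Theorem \ref{L:willmap}: setting $\sigma := \sigma_{h,e,b}(u)$, it suffices to establish the gap condition
$$\tau_{h,e,b}(u) \geq \sigma + g(e)(b-1)^{e}.$$
Once this is in hand, Theorem \ref{L:willmap} immediately delivers $S_{e,b}(\sigma_{h+1,e,b}(u))=\sigma$.

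First I would unpack the hypothesis. Set $T = e + p_{e,b}$. The condition $a_{0}=\cdots=a_{T}=b-1$ lets me write $\sigma = A\cdot b^{T+1} + (b^{T+1}-1)$ for some nonnegative integer $A$ whose units digit is $a_{T+1}$. The defining inequality $b^{p_{e,b}}>g(e)$ gives the crucial size comparison $g(e)(b-1)^{e} < g(e)\,b^{e} < b^{e}\cdot b^{p_{e,b}}\cdot b = b^{T+1}$. Consequently, any integer $v$ with $\sigma < v < \sigma + g(e)(b-1)^{e}$ admits the clean decomposition $v=(A+1)b^{T+1}+j$ with $0 \leq j < b^{T}$, so its base-$b$ expansion splits as the digits of $A+1$ on top and at most $T$ ``low'' digits encoding $j$.

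Next I would carry out the digit-sum comparison. A direct computation gives
$$S_{e,b}(v) = S_{e,b}(A+1) + S_{e,b}(j), \qquad S_{e,b}(\sigma) = S_{e,b}(A) + (T+1)(b-1)^{e}.$$
By the non-increasing digit property of $\sigma$ proved inside Theorem \ref{L:trailnines}, $a_{T+1}\leq b-2$ (otherwise $\sigma$ would have more than $T+1$ trailing $(b-1)$'s, in which case I would simply enlarge $T$). Hence incrementing $A$ to $A+1$ changes its $e$-power digit sum by at most $(b-1)^{e}-(b-2)^{e}$, while $S_{e,b}(j)\leq T(b-1)^{e}$ since $j<b^{T}$. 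Combining these,
$$S_{e,b}(\sigma)-S_{e,b}(v)\geq (T+1)(b-1)^{e} - T(b-1)^{e} - \bigl[(b-1)^{e}-(b-2)^{e}\bigr] = (b-2)^{e},$$
so $S_{e,b}(v) < S_{e,b}(\sigma)$ strictly.

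The final step is to turn this image-level strict decrease into a violation of minimality. If $v$ were height-$h$ $u$-attracted, then $S_{e,b}(v)$ would be a height-$(h-1)$ $u$-attracted integer strictly below $S_{e,b}(\sigma)$. I would apply Lemma \ref{L:digitbound} to $S_{e,b}(v)$ to produce an explicit preimage, and use the $(b-2)^{e}$ image gap together with the $p_{e,b}$ ``extra'' trailing $(b-1)$'s in $\sigma$ (beyond the $e$ forced by the base case of Theorem \ref{L:trailnines}) to argue that this preimage has strictly fewer base-$b$ digits than $\sigma$, and thus is a height-$h$ $u$-attracted integer strictly below $\sigma$, contradicting minimality. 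This last step is the main obstacle: everything through the computation $S_{e,b}(v)\leq S_{e,b}(\sigma)-(b-2)^{e}$ is straightforward digit arithmetic, but rigorously converting that image gap into a strict digit-count reduction for the constructed preimage is where the precise choice of $p_{e,b}$ (via $b^{p_{e,b}}>g(e)$) earns its keep by supplying exactly the slack needed to absorb the $g(e)$-digit correction from Waring's problem.
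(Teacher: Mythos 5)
Your reduction is the right one and matches the paper's: Theorem \ref{L:willmap} applies as soon as $\tau_{h,e,b}(u)\geq \sigma_{h,e,b}(u)+g(e)(b-1)^{e}$, so it suffices to rule out height-$h$, $u$-attracted numbers $v$ with $\sigma_{h,e,b}(u)<v<\sigma_{h,e,b}(u)+g(e)(b-1)^{e}$, and your decomposition $v=(A+1)b^{T+1}+j$ with $0\leq j<b^{T}$ (valid because $g(e)(b-1)^{e}<b^{p_{e,b}}b^{e}=b^{T}$) is sound. The problem is the mechanism you use to rule out such $v$. Your last step is genuinely broken, and you half-admit this yourself: knowing $S_{e,b}(v)\leq S_{e,b}(\sigma_{h,e,b}(u))-(b-2)^{e}$ does not yield a contradiction. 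First, $S_{e,b}(v)<S_{e,b}(\sigma_{h,e,b}(u))$ contradicts nothing at level $h-1$, since the theorem does not assume $S_{e,b}(\sigma_{h,e,b}(u))=\sigma_{h-1,e,b}(u)$ --- that is exactly the kind of statement being proved. Second, the route through Lemma \ref{L:digitbound} cannot close the gap: writing $w=S_{e,b}(v)$ and $y=S_{e,b}(\sigma_{h,e,b}(u))$, the lemma produces a preimage of $w$ with at most $\lfloor w/(b-1)^{e}\rfloor+g(e)$ digits, while all you know is $n\geq y/(b-1)^{e}$ and $w\leq y-(b-2)^{e}$; the constructed preimage can therefore have up to $g(e)$ \emph{more} digits than $\sigma_{h,e,b}(u)$. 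To force a digit-count drop this way you would need an image gap of roughly $g(e)(b-1)^{e}$, not $(b-2)^{e}$, and no amount of extra trailing $(b-1)$'s supplies that. Moreover, for $b=2$ your claimed strict inequality degenerates ($(b-2)^{e}=0$), and your ``enlarge $T$'' escape does not cover the case where \emph{every} digit of $\sigma_{h,e,b}(u)$ equals $b-1$, which by the paper's own argument is the only possibility when $b=2$.

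The correct argument is sitting inside your own decomposition: since $j<b^{T}$, the digit of $v$ in position $T$ is $0$. The paper exploits exactly this. Let $v'$ be obtained from $v$ by deleting all zero digits; then $S_{e,b}(v')=S_{e,b}(v)$, so $v'$ has the same height as $v$, while $v'$ has strictly fewer digits than $v$ and hence satisfies $v'<\sigma_{h,e,b}(u)$ (with a short separate comparison of leading digits when $\sigma_{h,e,b}(u)=b^{n}-1$, and a separate count of zeros when $b=2$). So if $v$ were height $h$ and $u$-attracted, $v'$ would be a smaller such number, contradicting minimality of $\sigma_{h,e,b}(u)$. No digit-sum comparison between $v$ and $\sigma_{h,e,b}(u)$ is needed at all; replacing your second and third steps with this deletion argument repairs the proof.
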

\begin{proof}
Let $\sigma_{h,e,b}(u)$ be such that $a_{0},...,a_{k}=b-1$ where $k\geq e+p_{e,b}$. Define $c_{j}=\sigma_{h,e,b}(u)+j$ for $1\leq j < g(e)(b-1)^{e}$. We will show that $c_{1}$ through $c_{g(e)(b-1)^{e}-1}$ are not height $h$, $u$ attracted numbers.

If $b >2$, using the definition of $p_{e,b}$ we get
$$j < g(e)(b-1)^{e} <  b^{p_{e,b}}(b-1)^{e} <  b^{p_{e,b}} b^{e} =  b^{e+p_{e,b}}.$$ Since $\sigma_{h,e,b}$ has at least $e+p_{e,b}+1$ trailing digits equal to $b-1$, $c_{1}$ has at least $e+p_{e,b}+1$ trailing zeros. Since $j<b^{e+p_{e,b}}$, $j$ has at most $e+p_{e,b}$ many digits. Hence $c_j$ has at least one digits which is zero for $1\leq j < g(e)(b-1)^{e}$.  Let $c_{j}'$ be formed by removing the all zero digits of $c_{j}$. We claim that $c_{j}'<\sigma_{h,e,b}(u)$. Recall that $n$ denotes the number of digits of $\sigma_{h,e,b}(u)$. If $a_{i}\neq b-1$ for some $i$, then $n \geq e+p_{e,b}+2$ and $c_j$ has $n$ digits for all $j$. Thus, $c_{j}'$ has at most $n-1$ digits and hence $c_j'<\sigma_{h,e,b}$.  If $a_{i}=b-1$ for all $i$, then $\sigma_{h,e,b}(u)=b^n-1$ and $c_1=b^n=b^{e+p_{e,b}+1}$, which means that $c_j< b^{e+p_{e,b}+1}+b^{e+p_{e,b}}$. Thus $c_{j}'$ has at most $n$ digits, while the leading digit of $\sigma_{h,e,b}$ is $b-1$, but the leading digit of $c_{j}'$ is 1, and since $b\neq 2$, $c_j'<\sigma_{h,e,b}$. 

This leaves only the case that $b=2$. In this case,
$$j < g(e)(2-1)^{e} = g(e) < 2^{p_{e,2}}.$$
Since the only allowable digits are 0 and 1, and we argued in the proof of Theorem~\ref{L:trailnines} that $\sigma_{h,e,b}$ does not have any digits that are equal to zero, $\sigma_{h,e,2}=2^{n+1}-1$ for some $n \geq e+p_{e,2}$, so $2^{n+1} \leq c_j < 2^{n+1} + 2^{p_{e,2}}$ for all $j$. Since $n \geq e+p_{e,2}$ and $e$ is at least 1, $c_j$ has at least 2 digits, that are equal to 0. Again, let $c_{j}'$ be formed by removing the all zero digits of $c_{j}$. Then $c_j'$ has fewer than $n$ digits and hence $c_j'<\sigma_{h,e,2}$.

So, if any $c_{j}$ are height $h$, $u$-attracted numbers, then $c_{j}'$ is a smaller height $h$, $u$ attracted number than $\sigma_{h,e,b}(u)$, contradicting the definition of $\sigma_{h,e,b}(u)$. Hence, $\tau_{h,e,b}(u) \geq g(e)(b-1)^{e}+\sigma_{h,e,b}(u)$. Therefore, by Theorem~\ref{L:willmap}, $S_{e,b}(\sigma_{h+1,e,b})=\sigma_{h,e,b}$.
\end{proof}

\begin{cor}\label{C:upperbnd}
Fix a base $b$ and exponent $e$. Let $d=\lceil \frac{g(e)+1}{1-(\frac{b-2}{b-1})^{e}}+e+p_{e,b}\rceil$. If $\sigma_{h,e,b}(u)\geq b^{d}$, then $S_{e,b}(\sigma_{h+1,e,b}(u))=\sigma_{h,e,b}(u)$.
\end{cor}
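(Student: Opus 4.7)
The plan is to chain together Theorems~\ref{L:trailnines} and \ref{L:ninesmap} by selecting the free parameter $\delta$ in Theorem~\ref{L:trailnines} to exactly match the hypothesis of Theorem~\ref{L:ninesmap}. The hypothesis of Theorem~\ref{L:ninesmap} is that the rightmost $e+p_{e,b}+1$ digits of $\sigma_{h,e,b}(u)$ are all $b-1$, i.e. $a_0,\dots,a_{e+p_{e,b}}=b-1$, while Theorem~\ref{L:trailnines} forces the rightmost $\delta+1$ digits to be $b-1$ once $\sigma_{h,e,b}(u)$ has sufficiently many digits. The obvious match is $\delta=e+p_{e,b}$.

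First I would set $\delta=e+p_{e,b}$, which is a positive integer since $e\geq 1$ and $p_{e,b}\geq 0$. Plugging into Theorem~\ref{L:trailnines}, the required number of digits of $\sigma_{h,e,b}(u)$ is at least $\frac{g(e)+1}{1-((b-2)/(b-1))^e}+e+p_{e,b}$; since the digit count is an integer, this is equivalent to the digit count being at least $d=\lceil \frac{g(e)+1}{1-((b-2)/(b-1))^e}+e+p_{e,b}\rceil$, the very quantity in the corollary.

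Next I would translate the magnitude hypothesis $\sigma_{h,e,b}(u)\geq b^d$ into a digit count. Since $b^d$ has exactly $d+1$ base $b$ digits, the bound $\sigma_{h,e,b}(u)\geq b^d$ implies that $\sigma_{h,e,b}(u)$ has at least $d+1\geq d$ digits. Therefore the digit-count hypothesis of Theorem~\ref{L:trailnines} (with $\delta=e+p_{e,b}$) is satisfied, and we conclude that $a_0,\dots,a_{e+p_{e,b}}=b-1$ in the base $b$ expansion of $\sigma_{h,e,b}(u)$.

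Finally I would invoke Theorem~\ref{L:ninesmap} directly: its hypothesis on the trailing digits is precisely what we just established, so its conclusion $S_{e,b}(\sigma_{h+1,e,b}(u))=\sigma_{h,e,b}(u)$ follows. There is no substantive obstacle here; the only thing to watch is the bookkeeping between the real-valued expression $\frac{g(e)+1}{1-((b-2)/(b-1))^e}+e+p_{e,b}$ inside the ceiling and the integer-valued digit count, and the off-by-one check that $\sigma_{h,e,b}(u)\ge b^d$ indeed yields enough digits to feed Theorem~\ref{L:trailnines}.
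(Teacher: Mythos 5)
Your proof is correct and takes essentially the same route as the paper's: apply Theorem~\ref{L:trailnines} with $\delta=e+p_{e,b}$, so that its digit threshold becomes the quantity inside the ceiling defining $d$, and then feed the resulting trailing $(b-1)$-digits into Theorem~\ref{L:ninesmap}. Your bookkeeping is in fact slightly more careful than the paper's, which asserts only that $\sigma_{h,e,b}(u)\geq b^{d}$ gives at least $d-1$ digits (the correct count is $d+1$, which is what is actually needed to meet the hypothesis of Theorem~\ref{L:trailnines}).
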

\begin{proof}
Since $\sigma_{h,e,b}(u)\geq b^{d}$, $\sigma_{h,e,b}(u)$ must have at least $d-1$ digits. Hence, by Theorem~\ref{L:trailnines}, $\sigma_{h,e,b}(u)=\sum_{i=0}^{n-1}a_{i}b^{i}$ where for $i\leq e+p_{e,b}$, $a_{i}=b-1$. Therefore, by Theorem~\ref{L:ninesmap}, $S_{e,b}(\sigma_{h+1,e,b}(u))=\sigma_{h,e,b}(u)$.
\end{proof}

Corollary~\ref{C:upperbnd} gives a bound $b^d$ for $\sigma_{h,e,b}(u)$ (in terms of $e$ and $b$) so that if $\sigma_{h,e,b}(u)\geq b^{d}$, then $S_{e,b}(\sigma_{h+1,e,b}(u))=\sigma_{h,e,b}$. This leads to the natural question:
\begin{que}
Is there a bound $\beta$ for $h$ (in terms of $e$ and $b$) so that if $h \geq \beta$ $S_{e,b}(\sigma_{h+1,e,b}(u))=\sigma_{h,e,b}$?
\end{que}

\section*{Acknowledgements}
This work was supported by a Bowen Summer Research Assistantship from Denison University. The authors also thank the referee for their helpful suggestions. Finally, the authors would like to acknowledge the Research Experiences for Undergraduate Faculty program.

\end{document}